\newtheorem{theorem}{Theorem}
\numberwithin{theorem}{section}
\newtheorem{proposition}[theorem]{Proposition}
\newtheorem{lemma}[theorem]{Lemma}
\newtheorem{corollary}[theorem]{Corollary}
\theoremstyle{remark}
\theoremstyle{definition}
\newtheorem{remark}[theorem]{Remark}
\numberwithin{equation}{section}
\newcommand\set[1]{\left\{\,#1\,\right\}}		
\DeclareMathOperator{\id}{id}					
\DeclareMathOperator{\divv}{div}				
\def\R{\mathbb{R}}
\begin{document}
\title{Estimating the convex relaxation of the ideal magnetohydrodynamics equations}
\author{Borb\'ala Fazekas \quad J\'ozsef J. Kolumb\'an}
\date{}
\maketitle

\begin{abstract}
We investigate the explicit convex relaxation of the ideal magnetohydrodynamics equations. We provide a non-trivial lower estimate on the lamination hull and an upper estimate on the $\Lambda$-convex hull, the latter providing inequalities which will be satisfied by weak limits of weak solutions of the ideal MHD equations, which serve as a model of averaged turbulent magnetohydrodynamical flows.
\end{abstract}

\section{Introduction}

The ideal magnetohydrodynamics equations (MHD for short) consist of the incompressible Euler equations with Lorentz force induced by a magnetic field, coupled with a Faraday-Maxwell induction equation and Ohm's law, to describe the evolution of the magnetic field. The system is used as a model for electrically conducting fluids, such as magnetic behaviour in plasmas and liquid metals. 
The theory of magnetohydrodynamics, including both its ideal and non-ideal (where viscosity and magnetic resistivity are allowed to be non-zero) formulations, originates from the pioneering work of Alfv\'en in \cite{Alf}. 
We further refer the reader to \cite{GBL,ST} for a more in-depth discussion on the mathematical modelling of magnetohydrodynamics.

In recent works \cite{FLSz,FLSz2,FLSz3} Faraco, Lindberg and Sz\'ekelyhidi Jr.  investigated the possibility of modelling magnetohydrodynamical turbulence arising in the ideal MHD equations by means of convex integration, generalizing the pioneering results of De Lellis and Sz\'ekelyhidi Jr. from \cite{DeL-Sz-Annals,DeL-Sz-Adm} regarding the hydrodynamic incompressible Euler equations. In particular, relying on the Tartar framework of convex integration based on compensated compactness (see e.g. \cite{Tartar}), one seeks to establish a relaxation of the system by determining the differential inclusion satisfied by the weak limits of weak solutions of the original system. Then, a typical convex integration result states that if this relaxation has a solution (often called "subsolution"), the original problem has infinitely many weak solutions which weakly converge to said subsolution. 

As suggested by Lax (\cite{Lax}), taking weak limits can be a deterministic way of substituting ensemble averaging, and hence a way to model turbulent flows when one cannot expect to deduce any meaningful information from examining the quantities involved (like velocity or pressure) point-wise. Indeed, there have been many recent works where the approach of De Lellis-Sz\'ekelyhidi Jr. was successfully generalized to other hydrodynamical models which are known to be turbulent. Often this approach has led to gaining quantitative information about the mean flow (i.e. subsolution), which also applies to all turbulent flows converging weakly to said mean flow, such as for instance the growth rate or the evolution of the interface of the turbulent mixing zone (see for instance \cite{GKEE,Mengual_Sz_vortex_sheet,Sz-KH} for the Kelvin-Helmholtz instability, \cite{GKSz,GK,GKRTE} for the Rayleigh-Taylor instability or \cite{Castro_Cordoba_Faraco,Castro_Faraco_Mengual,Castro_Faraco_Mengual_2, Foerster-Sz,Mengual,Noisette-Sz,Sz-Muskat} for the Muskat problem in the case of the incompressible porous media equations - IPM for short).

Although in \cite{FLSz,FLSz3} Faraco, Lindberg and Sz\'ekelyhidi Jr. established the existence of infinitely many weak solutions to the MHD equations via convex integration, they did not compute the explicit relaxation of the system, only showed that it is sufficiently large in an appropriate sense. Obtaining the complete explicit relaxation remains a difficult but important open problem in terms of deterministically modelling magnetohydrodynamical turbulence. The importance and interest lie specifically in the quantitative information regarding mean turbulent flows mentioned in the previous paragraph, which can usually only be inferred by knowing the precise convex-geometric structure of the relaxation. In particular, a famous instability in the context of MHD is the Velikhov (or electrothermal) instability \cite{v2,v3,v1}, which is of great interest in the development of magnetohydrodynamical propulsion technologies. There is hope that a full characterization of the convex relaxation could eventually yield similar calibre results for the Velikhov instability as those mentioned in the previous paragraph for classical hydrodynamical instabilities.

The aim of this paper is to make some advancements towards obtaining the precise structure of the convex relaxation, by giving non-trivial upper and lower estimates on it. 

The ideal MHD equations can be written as the following system of partial differential equations:
\begin{align}\label{eq:MHD}
\begin{split}
\partial_t u +\divv(u\otimes u-B\otimes B)+\nabla p =0,\\
\partial_t B +\divv(B\otimes u-u\otimes B)=0,\\
\divv u=\divv B=0,
\end{split}
\end{align}
for either $(x,t)\in\R^3 \times (0,T)$, or $(x,t)\in\mathbb T^3 \times (0,T)$
with appropriate zero-mean conditions for the fluid velocity $u(x,t)\in\R^3$ and the magnetic field $B(x,t)\in\R^3$. The fluid pressure is given by $p(x,t)\in\R$, as in the case of the usual hydrodynamic incompressible Euler equations. 
For further details, as well as the precise definition of distributional weak solutions to \eqref{eq:MHD}, we refer the reader to \cite{FLSz}, for the sake of brevity.

Using the Els\"asser variables $\alpha:=u+B$, $\beta:=u-B$, the above system can be written equivalently as
\begin{align}\label{eq:MHDE}
\begin{split}
\partial_t \alpha +\divv(\alpha\otimes \beta)+\nabla p =0,\\
\partial_t \beta +\divv(\beta\otimes \alpha)+\nabla p =0,\\
\divv \alpha=\divv \beta=0.
\end{split}
\end{align}
We aim to study the convex relaxation of \eqref{eq:MHDE} as a differential inclusion within the Tartar framework. To that end, one may rewrite \eqref{eq:MHDE} as the linear system
\begin{align}\label{eq:linear_system}
\begin{split}
\partial_t \alpha +\divv(M)=0,\\
\partial_t \beta +\divv(M^T) =0,\\
\divv \alpha=\divv \beta=0,
\end{split}
\end{align}
together with the nonlinear constraint $M=\alpha\otimes \beta+p\id$.

Since within the Tartar framework one is usually interested in generating infinitely many weak solutions to \eqref{eq:MHDE}, which one would like to be in the class $L^\infty$, we introduce a bound. Furthermore, the pressure will never be changed during the convex integration/convex relaxation scheme, so one can consider $p$ to be a given continuous function.

Consider $r>0$, $s>0$, and suppose that the pressure  also satisfies $|p|\leq rs$.
Let $z=(\alpha,\beta,M)\in Z:=\R^3\times\R^3\times\R^{3\times3}$ and define the set
\begin{align}\label{eq:nonlinear_constraints}
K_{r,s}:=\left\{z\in Z:\ M=\alpha\otimes \beta+p\id,\ |\alpha|=r,\ |\beta|=s \right\}.
\end{align}
Note that implicitly $K_{r,s}$ also depends on $p=p(x,t)$, however $p$ is considered to be a fixed given function during the convex relaxation (as mentioned above), and the aim of this paper is not to carry out a convex integration scheme, but to investigate the convex relaxation, so for simplicity we omit such a dependence from the notations. 
An adapted framework of convex integration which also treats the $(x,t)$ dependence of the set of nonlinear constraints can be found for instance in \cite{Crippa}.

The relaxation of  $K_{r,s}$ can be defined as the smallest set $\tilde K_{r,s}$ for which there holds that for all solutions of \eqref{eq:linear_system} with values in $K_{r,s}$, their weak limits take values in $\tilde K_{r,s}$. Hence we introduce 
 the associated wave cone, defined as
\begin{equation}\label{eq:wave_cone}
\Lambda=\set{\bar{z}\in Z:\ \exists\ (\xi,c)\in\mathbb{R}^3\times\mathbb R:\ \begin{pmatrix}
\bar{M} & \bar{\alpha}\\
\bar{M^T} & \bar{\beta}\\
\bar{\alpha}^T & 0\\
\bar{\beta}^T & 0
\end{pmatrix} \begin{pmatrix}
\xi \\ c
\end{pmatrix}=0,\ \xi\neq 0}.
\end{equation}
The wave cone essentially gives directions in which one is allowed to oscillate while still solving the PDE, more precisely, for any $\bar z\in\Lambda$, $h\in C^\infty(\mathbb R)$, setting $z(x,t)=\bar{z}h((x,t)\cdot(\xi,c))$ yields a solution of \eqref{eq:linear_system}. Here we also note that the condition $\xi\neq 0$ serves to eliminate trivial oscillations in time only, and is a condition that is usually imposed for such problems in the Tartar framework.

Although as already mentioned, the aim of this paper is not to carry out a convex integration scheme, only to investigate the convex relaxation itself, it is to be noted that constructing localized plane wave-like solutions for the MHD equations is more difficult, and pertains to the theory of Faraday two-forms. For further details, we once more refer the reader to \cite{FLSz}, Sections 4 and 5.

Since, as mentioned above, we are looking for a relaxation $\tilde K_{r,s}$ which contains the values of weak limits of solutions taking values in $K_{r,s}$,
in view of the relationship between convexity and weak convergence, one may consider the ($\Lambda$-)convex relaxation of $K_{r,s}$ by taking $\tilde K_{r,s}=K_{r,s}^\Lambda$, i.e. the $\Lambda$-convex hull of $K_{r,s}$. We recall that a function $f:Z\to\R$ is $\Lambda$-convex if $t\mapsto f(z+t\bar z)$ is convex for any $z\in Z$, $\bar z\in\Lambda$. The 
$\Lambda$-convex hull of a compact set $C\subset Z$ is then defined as the set of points from $Z$ which cannot be separated from $C$ by a  $\Lambda$-convex function. Equivalently, the hull is the smallest $\Lambda$-convex set containing $C$, where similarly a set is said to be $\Lambda$-convex if, for any two points $z_1,z_2$ in the set for which $z_1-z_2\in\Lambda$, the segment $[z_1,z_2]=\set{z\in Z:\ z=z_1+t(z_2-z_1),\ t\in[0,1]}$ is also contained in the set.

One can expect (see in particular the first paragraph of Section \ref{sec:lam}) that as long as the $\Lambda$-convex hull is non-trivial, it is indeed the appropriate relaxation, i.e. weak limits of weak solutions to the original problem will take values in the $\Lambda$-convex hull. The difficulty then lies in computing the hull explicitly. 

Typically, there are two approaches to achieve this: either to estimate from above by trying to find the smallest $\Lambda$-convex set containing $K_{r,s}$; or from below by computing so-called laminates, i.e. connecting points from $K_{r,s}$ with $\Lambda$-segments. We note that a priori it is possible that the upper and lower estimates cannot be improved to the point where they coincide, for instance this was the case for the stationary IPM equations in \cite{Hitruhin-Lindberg}. Nonetheless, it is also of mathematical interest to know such behaviour.

We present our main results for the two estimates in Propositions \ref{prop:lam} and \ref{prop:hull} below. While there remains a gap between these two results (see also the discussion in Section \ref{sec:conc}), to the authors' best knowledge, currently there exist no better results in either of these two  directions of trying to explicitly obtain the convex relaxation of the ideal MHD equations.

In particular, we can compare our results to those obtained in \cite{FLSz}, where 
the results regarding laminates are contained in Lemmas 6.10-6.14, and where the size of the relaxation in both the fluid and magnetic parts has to be small (controlled by certain constants $\varepsilon_\tau$, not explicit quantities involving $z$). In our Els\"asser formulation this essentially amounts to the relaxation of $M_0(z)=0$, for which already in our lower estimate of the lamination hull from Proposition \ref{prop:lam} we provide explicit inequalities (such as \eqref{eq:nnlam1pt5}), which do not require the introduction of any small error parameters. Hence, it yields an advancement towards the explicit characterization of the lamination hull.

Furthermore, providing a non-trivial upper estimate of the $\Lambda$-convex hull via the set $\overline U$ from Proposition \ref{prop:hull} is meaningful because weak limits of weak solutions of ideal MHD will still be in this set, i.e. averaged turbulent flows will satisfy the corresponding (non-strict) inequalities involved in \eqref{eq:hull}.

Finally, we mention here the result \cite{dyn} regarding the convex relaxation of the kinematic dynamo equations, which can be considered as a toy model for ideal MHD. In said paper the relaxation was already obtained from the set of first-order laminates. However, in our case there is much less freedom in certain choices due to the coupling of the Cauchy momentum equation with the induction equation, so we are unable to obtain the complete characterization of the $\Lambda$-convex hull with similar arguments as in \cite{dyn}.

We hope that the analysis presented in this paper inspires and helps the research community to further bridge the gap
 between existing results (such as \cite{FLSz}) and obtaining the complete $\Lambda$-convex hull associated with the ideal magnetohydrodynamics equations, which would essentially amount to closing the gap between Propositions \ref{prop:lam} and \ref{prop:hull}. 

\section{Main Results}

Our main results consist of a lower and upper estimate on the hull $K_{r,s}^\Lambda$, given by the two propositions in this section.

\subsection{A lower estimate on the $\Lambda$-convex hull via laminates}\label{sec:lam}

One approach to calculate the $\Lambda$-convex hull is via the so-called laminates method, i.e. define recursively the sets
\begin{align*}
K_{r,s}^{\Lambda,0}:=K_{r,s},\ K_{r,s}^{\Lambda,i+1}:=\set{z\in Z:\ \exists \bar z\in\Lambda,\ t_1\leq 0\leq t_2:\ z+t_{1,2}\bar z\in  K_{r,s}^{\Lambda,i}},\ i\geq 0.
\end{align*}
The lamination convex hull is then defined as $K_{r,s}^{\Lambda,lc}:=\cup_{i\geq 0}K_{r,s}^{\Lambda,i}$.
If then one obtains after a finite number of steps a set $K_{r,s}^{\Lambda,i}$ that is $\Lambda$-convex, then one has found the $\Lambda$-convex hull, since by definition one has $K_{r,s}^{\Lambda,i}\subset K_{r,s}^{\Lambda,lc}\subset K_{r,s}^{\Lambda}$, for all $i\geq 0$.

As already mentioned in the introduction, the lamination convex hull in general may be strictly smaller than the $\Lambda$-convex hull, see for instance the discussion in \cite{Hitruhin-Lindberg}. For classical examples of strict inclusion related to the theory of $T_4$-configurations, see for instance \cite{MSver,Sver,Sz-rank}.

Define $M_0:Z\to\R^{3\times 3}$ and $G:Z\to[0,+\infty)$ as
\begin{align}\label{eq:matrix}
M_0(z)=\alpha\otimes\beta-M+p\id,\quad G(z)=\sqrt{(r^2-|\alpha|^2)(s^2-|\beta|^2)}.
\end{align}
We have the following lower bound on the hull due to laminates.
\begin{proposition}\label{prop:lam}
Suppose that for $z\in Z$ there hold $|\alpha|\leq r$, $|\beta|\leq s$, and there exist $\bar\alpha,\bar\beta\in\mathbb R^3$ such that
\begin{align}
&M_0(z)+\bar\alpha\otimes\bar\beta=0,\label{eq:nnlam1}\\
&|\bar\alpha||\bar\beta|\leq G(z),\label{eq:nnlam1pt5}\\
&(\beta\cdot\bar\beta)\sqrt{r^2-|\alpha|^2}|\bar\alpha|=(\alpha\cdot\bar\alpha)\sqrt{s^2-|\beta|^2}|\bar\beta|,\label{eq:nnlam2}\\
&(\alpha-\beta)\cdot \bar\alpha\times\bar\beta=0\label{eq:nnlam4}.
\end{align}
Then $z\in K_{r,s}^{\Lambda,3}$.
\end{proposition}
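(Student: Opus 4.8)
The plan is to exhibit $z$ as a point of a single $\Lambda$-segment both of whose endpoints lie in $K_{r,s}^{\Lambda,2}$; by the definition of the iterated lamination hulls this gives $z\in K_{r,s}^{\Lambda,3}$ at once. The construction hinges on the fact, recorded in \eqref{eq:nnlam1}, that the defect $M_0(z)=-\bar\alpha\otimes\bar\beta$ has rank one, together with two elementary families of wave-cone directions described below.

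First I would establish a reduction lemma: any $w=(\alpha,\beta,M)$ with $M_0(w)=0$, $|\alpha|\le r$ and $|\beta|\le s$ belongs to $K_{r,s}^{\Lambda,2}$. Checking \eqref{eq:wave_cone} directly, for any $\bar a,\bar b\in\R^3$ the vectors $(\bar a,0,\bar a\otimes\beta)$ and $(0,\bar b,\alpha\otimes\bar b)$ lie in $\Lambda$ (take $\xi$ orthogonal to $\bar a$, resp.\ $\bar b$, and a suitable scalar $c$). Moving along either one leaves $M_0$ unchanged and alters only $\alpha$, resp.\ only $\beta$; choosing the parameters $t_1\le0\le t_2$ as the two points where the corresponding line meets the sphere of radius $r$, resp.\ $s$, one saturates $|\alpha|=r$, resp.\ $|\beta|=s$, while staying in $\{M_0=0\}$. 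Hence from $w$ one first reaches two points with $|\beta|=s$ (these lie in $K_{r,s}^{\Lambda,1}$, as a further $\alpha$-adjustment lands them in $K_{r,s}$), proving $w\in K_{r,s}^{\Lambda,2}$.

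It remains to split $z$ into two points of $\{M_0=0\}$ obeying the norm bounds. For the direction $\bar z=(\bar\alpha,\bar\beta,\bar\alpha\otimes\beta+\alpha\otimes\bar\beta)$ one computes, using \eqref{eq:nnlam1}, that $M_0(z+t\bar z)=(t^2-1)\bar\alpha\otimes\bar\beta$, so the endpoints $z\pm\bar z$ satisfy $M_0=0$. This $\bar z$ lies in $\Lambda$ exactly when some nonzero $\xi$ is orthogonal to $\bar\alpha$, $\bar\beta$ and $\alpha-\beta$, which when $\bar\alpha\times\bar\beta\neq0$ is precisely \eqref{eq:nnlam4}; the degenerate cases $\bar\alpha\parallel\bar\beta$ and $\bar\alpha\otimes\bar\beta=0$ (the latter giving $M_0(z)=0$ and reducing to the lemma) are treated separately. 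Since only the product $\bar\alpha\otimes\bar\beta$ is fixed, I am free to rescale $(\bar\alpha,\bar\beta)\mapsto(\lambda\bar\alpha,\lambda^{-1}\bar\beta)$ and to split asymmetrically at $t_1\le0\le t_2$; requiring $M_0=0$ at both endpoints then forces the resulting shifts $A_i$ of $\alpha$ and $B_i$ of $\beta$ to satisfy $A_1B_2=A_2B_1=-1$.

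The main obstacle is the feasibility of this asymmetric splitting, i.e.\ producing shifts with $|\alpha+A_i\bar\alpha|\le r$ and $|\beta+B_i\bar\beta|\le s$ for $i=1,2$, so that the endpoints fall under the reduction lemma. The admissible shifts fill intervals $[A_-,A_+]$ and $[B_-,B_+]$, the roots of $|\alpha+A\bar\alpha|^2=r^2$ and $|\beta+B\bar\beta|^2=s^2$; a short argument reduces solvability of $A_1B_2=A_2B_1=-1$ inside these intervals to the two inequalities $|A_-|B_+\ge1$ and $A_+|B_-|\ge1$. This is where the two hypotheses enter cleanly: with $a=\sqrt{r^2-|\alpha|^2}$ and $b=\sqrt{s^2-|\beta|^2}$ one has $A_+|A_-|=a^2/|\bar\alpha|^2$ and $B_+|B_-|=b^2/|\bar\beta|^2$, so the product of the two quantities is $a^2b^2/(|\bar\alpha|^2|\bar\beta|^2)$, which is $\ge1$ exactly by \eqref{eq:nnlam1pt5} since $G(z)=ab$; and computing the ratios $|A_-|/A_+$, $|B_-|/B_+$ shows the two quantities are equal precisely when $(\alpha\cdot\bar\alpha)/D_\alpha=(\beta\cdot\bar\beta)/D_\beta$, where $D_\alpha=\sqrt{(\alpha\cdot\bar\alpha)^2+|\bar\alpha|^2a^2}$ and $D_\beta=\sqrt{(\beta\cdot\bar\beta)^2+|\bar\beta|^2b^2}$, which on squaring is exactly \eqref{eq:nnlam2}. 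Thus \eqref{eq:nnlam2} balances the two quantities and \eqref{eq:nnlam1pt5} forces their common value to be at least $1$, so both inequalities hold and the endpoints exist. I expect the delicate points to be exactly these two equivalences --- the reduction of solvability to $|A_-|B_+\ge1$, $A_+|B_-|\ge1$ and their decoupling along \eqref{eq:nnlam1pt5}--\eqref{eq:nnlam2} --- together with the boundary cases $a=0$ or $b=0$, where \eqref{eq:nnlam1pt5} forces $\bar\alpha\otimes\bar\beta=0$ and the claim reduces to the lemma.
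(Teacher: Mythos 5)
Your argument is correct and follows essentially the same route as the paper: the same reduction lemma that the $M_0=0$ ball lies in $K_{r,s}^{\Lambda,2}$ (the paper's Lemma \ref{lem:lam2}, via the same two one-variable $\Lambda$-directions), followed by a single rank-one splitting of $z$ along a $\Lambda$-segment whose endpoints annihilate $M_0$. The only difference is bookkeeping: the paper first rescales $(\bar\alpha,\bar\beta)\mapsto(\lambda\bar\alpha,\lambda^{-1}\bar\beta)$ to normalize and then invokes Lemma \ref{lem:lam3} with an explicit correction term $-\tfrac{2\alpha\cdot\bar\alpha}{|\bar\alpha|^2}\bar\alpha\otimes\bar\beta$ in $\bar M$, whereas you keep the rescaling and the asymmetry of the split as free parameters and verify feasibility through the two inequalities $|A_-|B_+\ge 1$ and $A_+|B_-|\ge 1$, with \eqref{eq:nnlam1pt5} and \eqref{eq:nnlam2} playing exactly the same roles in both treatments.
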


We will briefly show that condition \eqref{eq:nnlam4} corresponds to (the relaxation of) Ohm's law.
To do so, for a skew-symmetric matrix $A\in\R^{3\times 3}$, define $f_0(A)\in\R^3$ as the unique vector such that $A\xi=\xi\times f_0(A)$, for all $\xi\in\R^{3}.$ Clearly, $f_0$ is linear.

\begin{remark}[The hidden relaxation of Ohm's law]\label{rem:ohm}
Let us make the important observation that when switching viewpoints from the Els\"asser one to the classical one, the function $\frac{1}{2}f_0(M-M^T)$ corresponds to the (relaxation of) the electric field $E$, as per  \cite{FLSz}. Indeed, for solutions of the ideal MHD equations \eqref{eq:MHD}, one can rewrite
$$\nabla\cdot(B\otimes u - u\otimes B)= \nabla\times(B\times u),$$
from where the correspondence follows, using the identities $E=B\times u$ and $B\otimes u - u\otimes B=\frac{1}{2}(M-M^T)$.

For subsolutions, the function $f_0(M-M^T)\cdot (\alpha-\beta)$ corresponds to $E\cdot B$ from \cite{FLSz} (using that $\alpha-\beta=2B$), and as shown in Section 3.2 from \cite{FLSz}, is $\Lambda$-affine and vanishes on $K_{r,s}$, as well as on $\Lambda$. Indeed, the latter follows by noting that for any $\bar z\in\Lambda$,
there exist $0\neq \xi\in\mathbb R^3$, $c\in\mathbb R$ such that
$$(\bar M -\bar M^T)\xi = - c(\bar\alpha-\bar\beta).$$
Setting $w:=f_0(\bar M-\bar M^T)$, we get
$$\xi\times w= - c(\bar\alpha-\bar\beta).$$
If $c\neq 0$, $\bar\alpha-\bar\beta$ is parallel to $\xi\times w$, thus $w\cdot (\bar\alpha-\bar\beta)=0$. If $c=0$, $w$ is parallel to $\xi$, and using the fact that $\bar z\in\Lambda$ also implies $(\bar\alpha-\bar\beta)\cdot \xi=0$, we once more have $w\cdot (\bar\alpha-\bar\beta)=0$. Thus, we have shown that the function $z\mapsto f_0(M-M^T)\cdot (\alpha-\beta)$ vanishes on $\Lambda$. Since it is quadratic, there follows that it is also $\Lambda$-affine. It is then straightforward to check that it also vanishes on $K_{r,s}$, thus putting the above together, it must also vanish on $K_{r,s}^\Lambda$, and as such, constitutes a hidden (relaxed) form of Ohm's law. 

Furthermore, in Section 6 of \cite{FLSz} it was also proved that the relative interior of $K_{r,s}^{\Lambda,lc}$ with respect to the manifold $\mathscr M=\set{z\in Z:\ f_0(M-M^T)\cdot(\alpha-\beta)=0}$ is non-empty. 
\end{remark}

\begin{remark}
On one hand, if 
\begin{align*}
A=\begin{pmatrix}
0 & a_3 & -a_2\\
-a_3 & 0 & a_1\\
a_2 & -a_1 & 0
\end{pmatrix},
\end{align*}
then one obtains $f_0(A)=(a_1,a_2,a_3)$.
On the other hand, using the Moore-Penrose pseudoinverse $A^\dagger$, a vector $\zeta\in\R^3$ is perpendicular to $f_0(A)$ if and only if
$$(\id-A^\dagger A)\zeta=0.$$
Hence, the condition $f_0(M-M^T)\cdot (\alpha-\beta)=0$ can be written equivalently as
\begin{align*}
(\|M-M^T\|^2_F\id+2(M-M^T)^2 )(\alpha-\beta)=0,
\end{align*}
where $\|\cdot\|_F$ denotes the Frobenius norm of a matrix.
\end{remark}

One can then easily get rid of the dependence on $\bar\alpha,\bar\beta$ from Proposition \ref{prop:lam} via the following equivalent conditions.
\begin{corollary}\label{cor:reform}
Suppose that for $z\in Z$ there hold $|\alpha|\leq r$, $|\beta|\leq s$, and
\begin{align}
&\text{rank}(M_0(z))\leq 1,\quad \|M_0(z)\|_n\leq G(z),\label{eq:mlam1}\\
&\alpha^T M_0(z)\beta\leq 0,\label{eq:mlam1pt5}\\
&|M_0(z)\beta|\sqrt{r^2-|\alpha|^2}=|M_0(z)^T\alpha|\sqrt{s^2-|\beta|^2},\label{eq:reformeq}\\
&(\alpha-\beta)\cdot f_0(M-M^T)=0.\label{eq:mlam4}
\end{align}
Then $z\in K_{r,s}^{\Lambda,3}$.
\end{corollary}
Above, $\|\cdot\|_n$ denotes the nuclear (or Ky Fan) norm of a matrix, i.e. the sum of its singular values.

\subsection{An upper estimate on the $\Lambda$-convex hull}

Recall \eqref{eq:matrix} and define
the relatively open set
\begin{multline}\label{eq:hull}
U=\left\{z\in Z:\ |\alpha|<r,\ |\beta|<s,\ f_0(M-M^T)\cdot(\alpha-\beta)=0,\ \|M_0(z)\|_n<G(z) \right\}.
\end{multline}

We have the following upper bound on the $\Lambda$-convex hull.

\begin{proposition}\label{prop:hull}
There holds $
K_{r,s}^\Lambda\subset\overline U.$
\end{proposition}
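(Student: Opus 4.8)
The plan is to show that the set $\overline U$ is $\Lambda$-convex and contains $K_{r,s}$; since $K_{r,s}^\Lambda$ is the smallest $\Lambda$-convex set containing $K_{r,s}$, the inclusion follows immediately. Equivalently, and perhaps more robustly, I would exhibit a family of $\Lambda$-convex functions whose sublevel (or zero-level) sets cut out $\overline U$, so that any point outside $\overline U$ can be separated from $K_{r,s}$ by a $\Lambda$-convex function. Concretely, the defining conditions of $U$ split into three types: the box constraints $|p|\le rs$, $|\alpha|\le r$, $|\beta|\le s$; the affine constraint $f_0(M-M^T)\cdot(\alpha-\beta)=0$; and the key inequality $\|M_0(z)\|_n\le G(z)$ (passing to the closure replaces the strict inequalities by non-strict ones). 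The affine constraint is already known to be $\Lambda$-affine and to vanish on $K_{r,s}$ and on $\Lambda$ by the discussion following Proposition \ref{prop:lam}, so it is automatically respected; the main work is the inequality constraint.

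\textbf{Step 1: reduce to checking $\Lambda$-convexity of the relevant functions.} I would verify that each box constraint is expressed via a convex (hence $\Lambda$-convex) function of $z$, which is immediate since $|\alpha|^2$, $|\beta|^2$ and $|p|$ are convex. The heart of the argument is to prove that the function
\begin{equation*}
z\mapsto \|M_0(z)\|_n - G(z)
\end{equation*}
is $\Lambda$-convex, where $M_0(z)=\alpha\otimes\beta-M+p\,\id$ and $G(z)=\sqrt{(r^2-|\alpha|^2)(s^2-|\beta|^2)}$. Then $\overline U$ is an intersection of zero/sublevel sets of $\Lambda$-convex functions together with one $\Lambda$-affine equation, hence $\Lambda$-convex, and since every such function is nonpositive (resp. zero) on $K_{r,s}$ by the very definition \eqref{eq:nonlinear_constraints}, we get $K_{r,s}\subset\overline U$ and the proposition.

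\textbf{Step 2: analyze the restriction of $M_0$ and $G$ to a wave-cone line.} Fix $z\in Z$ and $\bar z=(\bar\alpha,\bar\beta,\bar M)\in\Lambda$, and study $t\mapsto \|M_0(z+t\bar z)\|_n - G(z+t\bar z)$. Writing out $M_0(z+t\bar z)$, the term $(\alpha+t\bar\alpha)\otimes(\beta+t\bar\beta)$ contributes a rank-one quadratic-in-$t$ perturbation, while $-\bar M$ and $\bar p\,\id$ contribute linear-in-$t$ terms; the wave cone relations \eqref{eq:wave_cone} should be exploited to control the rank-one structure of $\bar M$ (indeed membership in $\Lambda$ forces $\bar M$ to be compatible with $\bar\alpha,\bar\beta$ and a direction $\xi$). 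The nuclear norm is convex, but $t\mapsto\|M_0(z+t\bar z)\|_n$ need not be convex in general because of the quadratic $t^2\,\bar\alpha\otimes\bar\beta$ term; similarly $-G$ is concave as written. The crux is to show that the \emph{difference} is convex along $\bar z\in\Lambda$: the quadratic growth of $\|M_0\|_n$ coming from $t^2\bar\alpha\otimes\bar\beta$ must dominate the concavity of $-G(z+t\bar z)$, whose second derivative in $t$ involves exactly $|\bar\alpha|^2$, $|\bar\beta|^2$ and the products realized in $G$.

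\textbf{Main obstacle.} I expect the genuine difficulty to be Step 2: proving convexity of $t\mapsto\|M_0(z+t\bar z)\|_n - G(z+t\bar z)$ along wave-cone directions. The nuclear norm is only piecewise smooth and its second-order behavior is delicate where singular values collide, so one cannot simply differentiate twice; the natural route is to bound $\|M_0\|_n$ from below by a well-chosen linear functional (a dual pairing with a fixed matrix of norm $\le 1$, i.e. a subgradient) to convert the quadratic-in-$t$ rank-one term into a clean nonnegative contribution, and then to estimate the Hessian of $G$ along $\bar z$ using the constraint $|\bar\alpha||\bar\beta|$ that the wave cone imposes—mirroring the inequality \eqref{eq:nnlam1pt5} that appeared in the laminate lower bound. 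The comparison $|\bar\alpha||\bar\beta|\le G(z)$ and the Cauchy–Schwarz-type relations are what tie together the growth of the nuclear norm and the curvature of $G$; verifying that the algebra closes, uniformly over all admissible $\bar z\in\Lambda$, is where the real content lies.
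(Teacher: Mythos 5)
Your overall skeleton is the same as the paper's: show that $\overline U$ is a closed $\Lambda$-convex set containing $K_{r,s}$, treat the Ohm's law condition as a $\Lambda$-affine constraint, and reduce everything to the inequality $\|M_0(z)\|_n<G(z)$. But the one step that carries all the content --- the convexity of that inequality constraint --- is exactly the step you defer, and the route you sketch for it would fail as described, for two reasons. First, your signs are backwards: on the relevant region $\{|\alpha|\le r,\ |\beta|\le s\}$ the function $G$ is \emph{concave} (it is the geometric mean of the two concave functions $r^2-|\alpha|^2$ and $s^2-|\beta|^2$), so $-G$ is \emph{convex}, not concave as you claim; and the quadratic term $t^2\bar\alpha\otimes\bar\beta$ inside the nuclear norm is not a usable source of ``quadratic growth'' --- by the reverse triangle inequality it can just as well \emph{decrease} $\|M_0(z+t\bar z)\|_n$ by up to $t^2|\bar\alpha||\bar\beta|$, i.e.\ it is the dangerous, potentially concave contribution. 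The true mechanism is the opposite of what you describe: the concavity of $G$ has enough surplus curvature that $t\mapsto G(z+t\bar z)+t^2|\bar\alpha||\bar\beta|$ is still concave along every line, and this is what absorbs the worst-case behaviour of the nuclear-norm term. Second, your appeal to the wave cone is misplaced: $\Lambda$ is a cone, invariant under scaling of $\bar z$, so it cannot impose any inequality of the form $|\bar\alpha||\bar\beta|\le G(z)$; that inequality, \eqref{eq:nnlam1pt5}, is a \emph{hypothesis} on laminates in Proposition \ref{prop:lam}, not a property of $\Lambda$. In fact no wave-cone structure is needed or used for this step: the paper proves genuine convexity along \emph{all} directions of $Z$, and $\Lambda$ enters only through the $\Lambda$-affinity of $f_0(M-M^T)\cdot(\alpha-\beta)$.

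The missing idea, which is the heart of the paper's proof, is to replace the square root in $G$ by a family of quadratic comparison functions: for $\gamma\in[0,1]$ set
$$H_\gamma(z)=\gamma(|\alpha|^2-r^2)+(1-\gamma)(|\beta|^2-s^2)+2\sqrt{\gamma(1-\gamma)}\,\|M_0(z)\|_n.$$
Each $H_\gamma$ is convex on all of $Z$: along any line, the nuclear-norm term admits a supporting line minus at worst $2\sqrt{\gamma(1-\gamma)}\,t^2|\bar\alpha||\bar\beta|$, which is dominated via AM--GM by the quadratic terms $t^2\bigl(\gamma|\bar\alpha|^2+(1-\gamma)|\bar\beta|^2\bigr)$ coming from the first two summands. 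Then
$$\left\{|\alpha|<r,\ |\beta|<s,\ \|M_0(z)\|_n<G(z)\right\}=\bigcap_{\gamma\in[0,1]}\left\{|\alpha|<r,\ |\beta|<s,\ H_\gamma(z)<0\right\},$$
where one inclusion is AM--GM again and the other follows from the choice $\bar\gamma=(s^2-|\beta|^2)/(r^2-|\alpha|^2+s^2-|\beta|^2)$; hence the set is an intersection of convex sets and so convex. Alternatively, your plan can be repaired by proving directly that $\|M_0(z)\|_n-G(z)$ is convex on the slab $\{|\alpha|\le r,\ |\beta|\le s\}$ (it is, and in fact fully convex there, not merely $\Lambda$-convex), but any such proof must quantify the concavity of $G$ --- e.g.\ via $2\sqrt{ab}=\inf_{\lambda>0}(\lambda a+b/\lambda)$, which is the same $H_\gamma$ device --- rather than lean on nonexistent wave-cone constraints. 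As it stands, your proposal correctly locates the difficulty but points in the wrong direction for resolving it.
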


\begin{remark}[Comparison to the relaxation of the incompressible Euler equations]
Let us quickly compare our above result in the case when the magnetic field is additionally assumed to vanish with the relaxation obtained by De Lellis and Sz\'ekelyhidi Jr. for the incompressible Euler equations \cite{DeL-Sz-Annals}.
Of course for this to make sense, we need to formulate the two frameworks in a sufficiently equivalent manner, which requires some subtle care.

For our relaxation, it is straightforward to see due to the Els\"asser transformation that if $B=0$, one has $\alpha=\beta$ (thus we must also assume $r=s$), and implicitly $M=M^T$. Since these conditions are affine, and would hold on $K_{r,r}$, they should also be added to $U$ as an upper bound of $K^\Lambda_{r,r}.$

So our relaxation reduces to the linear equations
\begin{align}\label{eq:eq1}
\partial_t\alpha+\text{div } M =0,\quad \text{div }\alpha=0,
\end{align}
with now $M\in\mathcal S^{3\times 3}$, and
coupled with the point-wise constraints 
\begin{align}\label{eq:cons1}
|\alpha|\leq r,\quad \|\alpha\otimes\alpha-M+ p\id \|_n\leq r^2 - |\alpha|^2.
\end{align} 

Now let us carefully reformulate the relaxation of \cite{DeL-Sz-Annals} to a comparable form, recalling that a priori they hid the prescribed energy in the pressure, to allow for a traceless matrix $\sigma$ to appear under the divergence in the relaxed momentum equation. More precisely, their relaxation can be written as
\begin{align}\label{eq:eq2}
\partial_t v +\text{div } \left(\sigma+\frac{r^2}{3}\id\right)+\nabla p =0,\quad \text{div }v=0,
\end{align}
with  $\sigma\in\mathcal S_0^{3\times 3}$, and
coupled with the point-wise constraints 
\begin{align}\label{eq:cons2}
|v|\leq r,\quad \lambda_{\max} (v\otimes v-\sigma)\leq \frac{r^2}{3}.
\end{align} 
However, this yields that the matrix $\frac{r^2}{3}\id-v\otimes v+\sigma$ is positive-semidefinite, and
 the fact that $\sigma$ is trace-free and $|v|\leq r$ imply that
 \begin{align}\label{eq:cons3}
 \left\|\frac{r^2}{3}\id-v\otimes v+\sigma\right\|_n =  \text{tr}\left(\frac{r^2}{3}\id-v\otimes v+\sigma \right)=r^2-|v|^2.
 \end{align}
 Thus, it follows that given $(v,\sigma)$ satisfying \eqref{eq:eq2},\eqref{eq:cons2}, setting $\alpha:=v$ and $M:=\sigma+\left(\frac{r^2}{3}+p\right)\id$, yields that $(\alpha,M)$ satisfy \eqref{eq:eq1},\eqref{eq:cons1}. In other words, the convex hull associated with the relaxation of the incompressible Euler equations is included in our upper bound, as one would expect.
 
The main reason for which this implication cannot be inverted is that the trace of $M$ should be constant  $r^2+3p$, which is supposed to be a prescribed quantity, that does not oscillate during a convex integration framework, such that the associated $\sigma$ is indeed trace-free. This does not hold in general over our $\overline U$.

On the other hand, we also do not know a priori that for $(\alpha,M)$ satisfying \eqref{eq:cons1}, the matrix $M-\alpha\otimes\alpha-p\id=-M_0(z)$, which corresponds to $\frac{r^2}{3}\id-v\otimes v+\sigma$ is positive semi-definite. However, if the above trace condition were assumed, the sign of $M_0(z)$ could be determined a posteriori. Indeed, one has for any $A\in \mathcal S^{3\times 3}$ that
$$|tr(A)|\leq\|A\|_n,$$
with equality if and only if $A$ is semidefinite. For $A=M_0(z)=\alpha\otimes\alpha-M+p\id$, with the assumption $tr(M)=r^2+3p$, one would get $tr(M_0(z))=|\alpha|^2-r^2$, and since $z\in \overline U$ here implies $\|M_0(z)\|_n\leq -tr(M_0(z))$, there would also follow that $M_0(z)\leq 0$.

In conclusion, the only reason for which our relaxation (assuming vanishing magnetic field) is in general strictly larger than the relaxation of the incompressible Euler equations, is that in the appropriate reformulation, $\sigma$ cannot be made trace-free.
\end{remark}

\subsection{Comparison of the two estimates}\label{sec:conc}

Let us compare the sets in Propositions \ref{prop:lam} and \ref{prop:hull}.

\begin{remark}[Difficulties in calculating higher order laminates]\label{rem:rem1}
The main issue in laminating further beyond $K_{r,s}^{\Lambda,3}$ is that $\bar\alpha,\bar\beta$ from \eqref{eq:nnlam1}-\eqref{eq:nnlam4} depend in a non-explicit (and possibly highly non-linear) manner on $z$, thus making it hard to quantify the effect of perturbing $z$ along a $\Lambda$-segment can have on these vectors.
In particular, the most difficult part seems to come from \eqref{eq:nnlam2}. If instead one tries to laminate the reformulated conditions from Corollary \ref{cor:reform}, then again the difficulty comes from \eqref{eq:reformeq}, which when perturbed along a $\Lambda$-segment, squared and expanded with respect to $t$ yields a polynomial of degree eight.
\end{remark}

\begin{remark}[Difficulties in improving the upper bound]
One can prove that a given set which is compact and $\Lambda$-convex must be the hull by the means of a Krein-Milman type argument (see e.g. Lemma 4.16. from \cite{Kirchheim}), if one can show that  all $\Lambda$-extremal points of the given set are in $K_{r,s}$.

For our $\overline U$ this would reduce to showing that for any $z\in\partial U\setminus K_{r,s}$ there exists $\bar z\in \Lambda$ such that $z+t\bar z \in \overline U$ for small values of $t$. However, in the case when $M_0(z)$ is rank-one and its nuclear norm is equal to $G(z)$, one seems to be unable to achieve such perturbations, unless $z\in K_{r,s}^{\Lambda,1}$. 
\end{remark}

Lastly, we can compare the above observations with Remark \ref{rem:rem2} from the next section. Indeed, on one hand if $z\in\partial U\setminus K_{r,s}$ is such that $\|M_0(z)\|_n=G(z)$, $\text{rank}(M_0(z))=1$, then one can easily show that this is equivalent to the existence of $\bar\alpha,\bar\beta\in\mathbb R^3$ such that conditions \eqref{eq:nnlam1}, \eqref{eq:nnlam4} and \eqref{eq:flam1} hold.
On the other hand, we have seen in Remark \ref{rem:rem2} that $z\in K_{r,s}^{\Lambda,1}$ is equivalent to \eqref{eq:nnlam1}, \eqref{eq:nnlam4}, \eqref{eq:flam1} and \eqref{eq:flam2}. The gap between the two is then only the condition \eqref{eq:flam2}, i.e. $\alpha\cdot\bar\alpha=\beta\cdot\bar\beta$.

Hence we conjecture that some further $\Lambda$-convex inequality should be added to $U$ in order to reduce it appropriately such that the case $z\not\in K_{r,s}^{\Lambda,1}$, $\|M_0(z)\|_n=G(z)$, $\text{rank}(M_0(z))=1$ can be excluded, which may be related to some appropriate lamination of the condition $\alpha\cdot\bar\alpha=\beta\cdot\bar\beta$ (or the equivalent conditions \eqref{eq:nnlam2} or \eqref{eq:reformeq}).

\section{Proof of the lower bound}

In this section we prove Proposition \ref{prop:lam}.
We begin with the following complete characterization of the set of first-order laminates.
\begin{lemma}\label{lem:lam1}
Suppose that for $z\in Z$ there hold $|\alpha|<r$ and $|\beta|<s$. Then $z\in K_{r,s}^{\Lambda,1}$ if and only if there exist $\bar\alpha,\bar\beta\in\mathbb R^3\setminus\{0\}$ such that
\begin{align}
&M_0(z)+G(z)\frac{\bar\alpha}{|\bar\alpha|}\otimes\frac{\bar\beta}{|\bar\beta|}=0,\label{eq:lam1}\\
&(\beta\cdot\bar\beta) \frac{r^2-|\alpha|^2}{s^2-|\beta|^2}=\alpha\cdot\bar\alpha
\quad\text{and}\quad 
\frac{r^2-|\alpha|^2}{s^2-|\beta|^2}=\frac{|\bar\alpha|^2}{|\bar\beta|^2},\label{eq:lam2}\\
&(\alpha-\beta)\cdot \bar\alpha\times\bar\beta=0\label{eq:lam3}.
\end{align}
\end{lemma}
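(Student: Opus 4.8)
The plan is to characterize when $z\in K_{r,s}^{\Lambda,1}$, i.e. when $z$ lies on a $\Lambda$-segment whose endpoints are in $K_{r,s}=K_{r,s}^{\Lambda,0}$. By definition of the first lamination step, this means there exist $\bar z\in\Lambda$ and $t_1\le 0\le t_2$ with $z+t_{1,2}\bar z\in K_{r,s}$. Since $p$ is never changed, the $\id$-component of $\bar z$ is fixed and only $(\bar\alpha,\bar\beta,\bar M)$ oscillate; I would first unpack the wave-cone condition \eqref{eq:wave_cone}. The matrix equation says $\bar M\xi=-c\bar\alpha$, $\bar M^T\xi=-c\bar\beta$, and $\bar\alpha\cdot\xi=\bar\beta\cdot\xi=0$. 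I expect that, after ruling out the degenerate $c=0$ case (which would force $\bar M=0$ and hence no genuine oscillation in the nonlinear part), one can normalize $c=-1$ and deduce that the direction $\bar z$ is essentially determined by rank-one data, giving $\bar M=\bar\alpha\otimes\xi$ simultaneously with $\bar M^T=\bar\beta\otimes\xi$; reconciling these two forces a compatible rank-one structure relating $\bar\alpha,\bar\beta,\xi$.

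The second step is to impose the two membership conditions $z+t_1\bar z\in K_{r,s}$ and $z+t_2\bar z\in K_{r,s}$. Writing $z^\pm=z+t_{1,2}\bar z$ with components $\alpha^\pm=\alpha+t_{1,2}\bar\alpha$ etc., the constraints $|\alpha^\pm|=r$, $|\beta^\pm|=s$ become two quadratic equations each (one for $t_1$, one for $t_2$). The key algebraic observation is that the condition ``$t_1\le 0\le t_2$ are the two roots'' is equivalent to a sign condition plus a matching of the products of roots via Vieta's formulas. Expanding $|\alpha+t\bar\alpha|^2=r^2$ gives $|\bar\alpha|^2 t^2+2(\alpha\cdot\bar\alpha)t+(|\alpha|^2-r^2)=0$, and similarly for $\beta$. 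For the SAME pair $t_1,t_2$ to solve both quadratics, the two quadratics must be proportional; comparing the ratios of constant term to leading coefficient yields $(r^2-|\alpha|^2)/|\bar\alpha|^2=(s^2-|\beta|^2)/|\bar\beta|^2$, which is exactly the second equation in \eqref{eq:lam2}, and comparing the linear-to-leading ratios gives the first equation in \eqref{eq:lam2}. The product of roots $t_1t_2=(|\alpha|^2-r^2)/|\bar\alpha|^2\le 0$ automatically encodes $t_1\le 0\le t_2$.

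Next I would translate the nonlinear constraint $M=\alpha\otimes\beta+p\id$ at the two endpoints into a statement about $z$ and $\bar z$. Since $M^\pm=\alpha^\pm\otimes\beta^\pm+p\,\id$ and $M^\pm=M+t_{1,2}\bar M$, subtracting $\alpha\otimes\beta+p\,\id$ and using $M_0(z)=\alpha\otimes\beta-M+p\,\id$ should, after using the root relations above, collapse the two endpoint conditions into the single identity \eqref{eq:lam1}, namely $M_0(z)+G(z)\,\frac{\bar\alpha}{|\bar\alpha|}\otimes\frac{\bar\beta}{|\bar\beta|}=0$; the factor $G(z)=\sqrt{(r^2-|\alpha|^2)(s^2-|\beta|^2)}$ arises precisely from $|t_1t_2|\,|\bar\alpha||\bar\beta|$ combined with the proportionality relation. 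Finally, condition \eqref{eq:lam3}, $(\alpha-\beta)\cdot\bar\alpha\times\bar\beta=0$, should emerge from the requirement that $\bar z\in\Lambda$ be consistent with the $\Lambda$-affine quantity $f_0(M-M^T)\cdot(\alpha-\beta)$ vanishing on $\Lambda$ (recalled in the text), i.e. the skew part $\bar M-\bar M^T$ must be orthogonal to $\alpha-\beta$ in the appropriate sense.

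The main obstacle I anticipate is the \emph{equivalence} direction in both senses simultaneously, and in particular the careful bookkeeping in the wave-cone analysis. The forward direction (membership $\Rightarrow$ existence of $\bar\alpha,\bar\beta$) requires extracting $\bar\alpha,\bar\beta$ cleanly from a $\bar z\in\Lambda$ that a priori carries extra components, and showing the stated three conditions are necessary; the converse requires reconstructing a genuine $\bar z\in\Lambda$ and the roots $t_1\le 0\le t_2$ from the three conditions and verifying both endpoints land in $K_{r,s}$, including the pressure bound $|p|\le rs$ and the sign/orientation constraints. I expect the delicate point to be showing that the two rank-one representations ($\bar M=\bar\alpha\otimes\xi$ and $\bar M^T=\bar\beta\otimes\xi$) are compatible and that this compatibility is exactly \eqref{eq:lam3}, rather than an additional independent constraint; disentangling which relations are forced by the wave cone versus by the endpoint membership is where the argument is most easily mishandled.
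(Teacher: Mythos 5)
Your handling of the root-matching is sound and coincides with the paper's: requiring the same pair $t_1<0<t_2$ to solve both quadratics $|\bar\alpha|^2t^2+2(\alpha\cdot\bar\alpha)t+|\alpha|^2-r^2=0$ and $|\bar\beta|^2t^2+2(\beta\cdot\bar\beta)t+|\beta|^2-s^2=0$ forces their proportionality, which is exactly \eqref{eq:lam2}, and $|t_1t_2|\,|\bar\alpha||\bar\beta|=G(z)$ together with $M_0=0$ at the endpoints produces \eqref{eq:lam1}. The genuine gap is in your wave-cone analysis, which you run in the wrong order and on false premises. First, $c=0$ does not force $\bar M=0$: it only forces $\xi\in\Kern\bar M\cap\Kern\bar M^T$ (the paper's Lemma \ref{lem:lam2} uses exactly such directions, with $c=0$ and $\bar M=\bar\alpha\otimes\beta\neq0$). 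Second, the wave cone does not determine $\bar M$ from $(\bar\alpha,\bar\beta,\xi,c)$: it prescribes only the action of $\bar M$ and $\bar M^T$ on the single vector $\xi$, leaving an affine family of admissible matrices. Your proposed resolution $\bar M=\bar\alpha\otimes\xi$ together with $\bar M^T=\bar\beta\otimes\xi$ is self-contradictory: if $\bar M=\bar\alpha\otimes\xi$ then $\bar M^T\xi=(\bar\alpha\cdot\xi)\xi=0$, since the last rows of the wave-cone system force $\bar\alpha\perp\xi$, so $\bar M^T\xi$ cannot equal $-c\bar\beta\neq0$. There is no ``compatible rank-one structure'' to reconcile; the relevant $\bar M$ is generically not rank one.

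What actually happens is the reverse order: the endpoint memberships $z+t_{1,2}\bar z\in K_{r,s}$ determine $\bar M$ through the nonlinear constraint, giving $\bar M=\alpha\otimes\bar\beta+\bar\alpha\otimes\beta+(1-2t)\bar\alpha\otimes\bar\beta$, equivalently $\bar M=\alpha\otimes\bar\beta+\bar\alpha\otimes\beta-\frac{2\alpha\cdot\bar\alpha}{|\bar\alpha|^2}\bar\alpha\otimes\bar\beta$; this explicit formula is also the direction you must exhibit in the sufficiency half and never write down. Only then does one test membership in $\Lambda$: the conditions $\xi\perp\bar\alpha$, $\xi\perp\bar\beta$ force $\xi\parallel\bar\alpha\times\bar\beta$ when $\bar\alpha\not\parallel\bar\beta$, and for this $\xi$ one computes $\bar M\xi=(\beta\cdot\xi)\bar\alpha$ and $\bar M^T\xi=(\alpha\cdot\xi)\bar\beta$, so a common $c$ exists if and only if $\alpha\cdot\xi=\beta\cdot\xi$, which is precisely \eqref{eq:lam3}. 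Your attribution of \eqref{eq:lam3} to the vanishing of the $\Lambda$-affine quantity $f_0(M-M^T)\cdot(\alpha-\beta)$ on $\Lambda$ is not a derivation: that vanishing is a condition on $(\bar\alpha-\bar\beta,\bar M)$, not a relation between the base point's $\alpha-\beta$ and the direction's $\bar\alpha\times\bar\beta$. You also omit the degenerate case $\bar\alpha\parallel\bar\beta$, where $\xi$ must be chosen differently (e.g.\ $\xi=(\alpha-\beta)\times\bar\alpha$) and \eqref{eq:lam3} holds trivially.
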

\begin{proof}
\textbf{Step 1.}
Let us first assume that \eqref{eq:lam1}-\eqref{eq:lam3} hold and let us prove that there exists $\bar z\in\Lambda$ such that $z+t_{1,2}\bar z\in K_{r,s}$ for some $t_1<0<t_2$.

We will set $\bar z:=(\bar \alpha,\bar \beta,\bar M)$ with $\bar \alpha,\bar \beta$ given by \eqref{eq:lam1}-\eqref{eq:lam3} and
$$\bar M=\alpha\otimes \bar\beta+\bar\alpha\otimes\beta - \frac{2\alpha\cdot\bar\alpha}{|\bar\alpha|^2}\bar\alpha\otimes\bar\beta.$$
It is then easy to check that $\bar z\in\Lambda$. Indeed, if $\bar\alpha\times\bar\beta\neq 0$, then setting $\xi:=\bar\alpha\times\bar\beta$, one has
\begin{align*}
\bar M \xi=(\beta\cdot\xi)\bar\alpha,\quad \bar M^T\xi=(\alpha\cdot\xi)\bar\beta,
\end{align*}
hence using \eqref{eq:lam3} we may set $c:=-\alpha\cdot\xi=-\beta\cdot\xi$ in order to obtain that
\begin{align*}
\begin{pmatrix}
\bar{M} & \bar{\alpha}\\
\bar{M^T} & \bar{\beta}\\
\bar{\alpha}^T & 0\\
\bar{\beta}^T & 0
\end{pmatrix} \begin{pmatrix}
\xi \\ c
\end{pmatrix}=0.
\end{align*}
On the other hand, if $\bar\alpha\times\bar\beta=0$, then $\bar\alpha\parallel\bar\beta$, so choosing any $$0\neq\xi\in(\alpha-\beta)^\perp\cap\bar\alpha^\perp$$ and once more $c:=-\alpha\cdot\xi=-\beta\cdot\xi$ allows us to conclude in a similar manner that $\bar z\in\Lambda$.

There remains to show that the system of equations
\begin{align*}
M_0(z+t \bar z)=0,\quad |\alpha+t\bar\alpha|=r,\quad |\beta+t\bar\beta|=s
\end{align*}
has two roots $t_1<0<t_2$. Squaring the latter two, one obtains the quadratic equations
\begin{align*}
t^2|\bar\alpha|^2+2t \alpha\cdot\bar\alpha + |\alpha|^2-r^2=0,\quad t^2|\bar\beta|^2+2t \beta\cdot\bar\beta + |\beta|^2-s^2=0.
\end{align*}
However, \eqref{eq:lam2} tells us exactly that these expressions have the same roots, which also satisfy $t_1<0<t_2$ due to $|\alpha|^2-r^2<0$. Also note that
$$t_{1,2}^2+2t_{1,2} \frac{\alpha\cdot\bar\alpha}{|\bar\alpha|^2}=\frac{r^2-|\alpha|^2}{|\bar\alpha|^2}=\frac{G(z)}{|\bar\alpha||\bar\beta|},$$
due to \eqref{eq:lam2}.

Finally, one may expand
\begin{align*}
M_0(z+t_{1,2} \bar z)&=M_0(z)+t_{1,2}\left(\alpha\otimes \bar\beta+\bar\alpha\otimes\beta-\bar M \right)+t_{1,2}^2 \bar\alpha\otimes\bar\beta
\\&=M_0(z)+\left(t_{1,2}^2+2t_{1,2} \frac{\alpha\cdot\bar\alpha}{|\bar\alpha|^2}\right)\bar\alpha\otimes\bar\beta
\\&=M_0(z)+\frac{G(z)}{|\bar\alpha||\bar\beta|}\bar\alpha\otimes\bar\beta,
\end{align*}
which is zero due to \eqref{eq:lam1}. Hence $z+t_{1,2}\bar z\in K_{r,s}$.

\textbf{Step 2.} Let us now show that if $z\in K_{r,s}^{\Lambda,1}$, then there exist $\bar\alpha,\bar\beta\in\mathbb R^3\setminus\{0\}$ satisfying
\eqref{eq:lam1}-\eqref{eq:lam3}. From the assumption on $z$ there follows that there exist $z_1,z_2\in K_{r,s}$, $\bar z:=z_1-z_2\in\Lambda$ such that $z=z_2+t\bar z=z_1-(1-t)\bar z$ for some $t\in(0,1)$.

First observe that there holds $\bar z=z_1-z_2\in\Lambda$ if and only if \eqref{eq:lam3} holds. Indeed, either $\bar\alpha\parallel\bar\beta$, hence \eqref{eq:lam3} would hold trivially, or one must pick $\xi$ to be (parallel to) $\bar\alpha\times\bar\beta$. However, one has
\begin{align*}
\bar M=\alpha_1\otimes\beta_1-\alpha_2\otimes\beta_2=\alpha_1\otimes\bar\beta+\bar\alpha\otimes\beta_2
=\alpha\otimes\bar\beta+\bar\alpha\otimes\beta+(1-2t)\bar\alpha\otimes\bar\beta,
\end{align*}
from where as in the first step,
$$\bar M \xi=(\beta\cdot\xi)\bar\alpha,\quad \bar M^T\xi=(\alpha\cdot\xi)\bar\beta,$$
thus $\bar z\in\Lambda$ is equivalent to $\alpha\cdot\xi=\beta\cdot\xi$, which is precisely \eqref{eq:lam3}.

Furthermore, since $z_2=z-t\bar z$ and $z_1=z+(1-t)\bar z$ are elements of $K_{r,s}$, we get 
\begin{align*}
|\alpha-t\bar\alpha|^2=r^2,\ |\beta-t\bar\beta|^2=s^2,\ |\alpha+(1-t)\bar\alpha|^2=r^2,\ |\beta+(1-t)\bar\beta|^2=s^2,
\end{align*}
which is possible exactly only if \eqref{eq:lam2} and $$t=\frac{1}{2}+\frac{\alpha\cdot\bar\alpha}{|\bar\alpha|^2}=\frac{1}{2}+\frac{\beta\cdot\bar\beta}{|\bar\beta|^2}$$
hold.
Thus, in particular there also holds
$$t(1-t)=\frac{G(z)}{|\bar\alpha||\bar\beta|}.$$
Finally, it is an easy calculation to check that, under these conditions, the fact that $M_0(z_i)=0$ holds yields precisely \eqref{eq:lam1}.
\end{proof}

Let us now state a result which is essentially the same as Lemma 6.10 from \cite{FLSz}, however we provide a slightly different proof.
\begin{lemma}\label{lem:lam2}
For any $z\in Z$ with $|\alpha|\leq r$, $|\beta|\leq s$ and $M_0(z)=0$, there holds $z\in K_{r,s}^{\Lambda,2}$.
\end{lemma}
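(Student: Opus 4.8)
The plan is to realize $z$ as a second-order laminate by growing $|\alpha|$ and $|\beta|$ up to their extremal values $r$ and $s$ along two explicit $\Lambda$-directions that leave $M_0$ \emph{exactly} invariant, so that the endpoints land in $K_{r,s}$ or in the first-order laminates $K_{r,s}^{\Lambda,1}$. The central observation is that, for arbitrary vectors $\bar\alpha,\bar\beta\in\R^3$, the directions
\[
\bar z_\alpha := (\bar\alpha,0,\bar\alpha\otimes\beta),\qquad \bar z_\beta := (0,\bar\beta,\alpha\otimes\bar\beta)
\]
both belong to $\Lambda$ and preserve the relation $M_0=0$. Indeed, using the expansion $M_0(z+t\bar z)=M_0(z)+t(\alpha\otimes\bar\beta+\bar\alpha\otimes\beta-\bar M)+t^2\bar\alpha\otimes\bar\beta$ from the proof of Lemma \ref{lem:lam1}, for $\bar z_\alpha$ the linear term vanishes by the choice $\bar M=\bar\alpha\otimes\beta$ and the quadratic term vanishes since $\bar\beta=0$, so $M_0(z+t\bar z_\alpha)=M_0(z)$ identically; $\Lambda$-membership follows by taking any $\xi\neq 0$ with $\xi\perp\bar\alpha$ and $c:=-\beta\cdot\xi$ in \eqref{eq:wave_cone}. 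The argument for $\bar z_\beta$ is symmetric.

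First I would establish the intermediate claim that every point $w=(\alpha,\beta,M)$ with $|\alpha|=r$, $|\beta|\leq s$, $M_0(w)=0$ and $|p|\leq rs$ lies in $K_{r,s}^{\Lambda,1}$. If $|\beta|=s$ this is immediate since $w\in K_{r,s}$. If $|\beta|<s$, I move along $\bar z_\beta$ with $\bar\beta\neq 0$ arbitrary: since $M_0$ and $\alpha$ are unchanged while $|\beta+t\bar\beta|^2=s^2$ is a quadratic in $t$ with leading coefficient $|\bar\beta|^2>0$ and constant term $|\beta|^2-s^2<0$, it has two roots $t_1<0<t_2$, and both endpoints $w+t_{1,2}\bar z_\beta$ satisfy $|\alpha|=r$, $|\beta|=s$, $M_0=0$, $|p|\leq rs$, hence lie in $K_{r,s}$. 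Thus $w\in K_{r,s}^{\Lambda,1}$.

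With this in hand the statement follows. If $|\alpha|=r$ already, then $z$ is exactly of the form just treated, so $z\in K_{r,s}^{\Lambda,1}\subset K_{r,s}^{\Lambda,2}$. If $|\alpha|<r$, I move along $\bar z_\alpha$ with $\bar\alpha\neq 0$ arbitrary: again $M_0$ and $\beta$ are preserved, and $|\alpha+t\bar\alpha|^2=r^2$ has roots $t_1<0<t_2$ because $|\alpha|^2-r^2<0$. The two endpoints $z+t_{1,2}\bar z_\alpha$ have $|\alpha|=r$, $|\beta|\leq s$, $M_0=0$ and $|p|\leq rs$, so by the intermediate claim they belong to $K_{r,s}^{\Lambda,1}$; therefore $z\in K_{r,s}^{\Lambda,2}$.

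The computations are elementary, so there is no genuine analytic obstacle; the points requiring care are purely bookkeeping. One must check the $\Lambda$-membership of $\bar z_\alpha$ and $\bar z_\beta$ honestly against \eqref{eq:wave_cone}, in particular producing an admissible $\xi\neq 0$ (possible since the orthogonal complement of a single nonzero vector in $\R^3$ is two-dimensional), confirm that adding these directions leaves $M_0$ invariant exactly rather than merely to first order, and handle the degenerate and boundary configurations ($|\alpha|=r$ or $|\beta|=s$ at the outset, or $\alpha=0$, $\beta=0$) separately so that the relevant quadratics still furnish roots of opposite sign.
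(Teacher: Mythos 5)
Your proof is correct and is essentially the paper's own argument, with the roles of $\alpha$ and $\beta$ interchanged in the two-step structure (the paper first saturates $|\beta|=s$ while growing $\alpha$, then grows $\beta$; you do the reverse) and with the same $\Lambda$-directions $(\bar\alpha,0,\bar\alpha\otimes\beta)$ and $(0,\bar\beta,\alpha\otimes\bar\beta)$ leaving $M_0$ exactly invariant. A minor point in your favour: choosing an arbitrary $\xi\neq 0$ with $\xi\perp\bar\alpha$ is slightly more robust than the paper's choice $\xi=\bar\alpha\times\beta$, which degenerates when $\bar\alpha\parallel\beta$.
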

\begin{proof} 
\textbf{Step 1.} Let us show that 
$$\set{z\in Z:\ |\alpha|\leq r,\ |\beta|=s,\ M_0(z)=0}\subset K_{r,s}^{\Lambda,1}.$$
If $|\alpha|=r$, we are in $K_{r,s}$, so the result follows trivially.

If $|\alpha|<r$, then if $\alpha\neq 0$, set $\bar\alpha:=\frac{\alpha}{|\alpha|}$, otherwise pick it to be any arbitrary unit vector, and set $\bar\beta:=0$, $\bar M:=\bar\alpha\otimes\beta$. Then, choosing any $0\neq\xi\in\bar\alpha^\perp\cap\beta^\perp$ and $c:=0$ easily yields $\bar z\in\Lambda$.

We also trivially have that $M_0(z+t\bar z)=M_0(z)$, for all $t\in\mathbb R$, and clearly $|\alpha+t\bar\alpha|^2=r^2$ has two roots $t_1<0<t_2$. Hence $z+t_{1,2}\bar z\in K_{r,s}$

\textbf{Step 2.} Now let us prove that 
$$\set{z\in Z:\ |\alpha|\leq r,\ |\beta|\leq s,\ M_0(z)=0}\subset K_{r,s}^{\Lambda,2}.$$
We aim to reach the set from Step 1 by moving along appropriate $\Lambda$-segments.
We may suppose that $|\beta|< s$, otherwise the result would follow from Step 1.
If $\beta\neq 0$, set $\bar\beta:=\frac{\beta}{|\beta|}$, otherwise pick it to be any arbitrary unit vector, and set now $\bar\alpha:=0$, $\bar M:=\alpha\otimes\bar\beta$. Once more, choosing any $0\neq\xi\in\alpha^\perp\cap\bar\beta^\perp$ and $c:=0$ easily yields $\bar z\in\Lambda$.

As before, we have that $M_0(z+t\bar z)=M_0(z)$, for all $t\in\mathbb R$, and $|\beta+t\bar\beta|^2=s^2$ has two roots $t_1<0<t_2$. Hence, using Step 1 we may deduce that $z+t_{1,2}\bar z\in K_{r,s}^{\Lambda,1}$, which concludes the proof of the lemma.
\end{proof}

As a consequence, we have the following result.
\begin{lemma}\label{lem:lam3}
Suppose that for $z\in Z$ there hold  $|\alpha|<r$, $|\beta|<s$, and there exist $c\in(0,G(z)]$ and $\bar\alpha,\bar\beta\in\mathbb R^3\setminus\{0\}$ such that
\begin{align}
&M_0(z)+c\frac{\bar\alpha}{|\bar\alpha|}\otimes\frac{\bar\beta}{|\bar\beta|}=0,\label{eq:nlam1}\\
&(\beta\cdot\bar\beta) \frac{r^2-|\alpha|^2}{s^2-|\beta|^2}=\alpha\cdot\bar\alpha
\quad\text{and}\quad 
\frac{r^2-|\alpha|^2}{s^2-|\beta|^2}=\frac{|\bar\alpha|^2}{|\bar\beta|^2},\label{eq:nlam2}\\
&(\alpha-\beta)\cdot \bar\alpha\times\bar\beta=0\label{eq:nlam3}.
\end{align}
Then $z\in K_{r,s}^{\Lambda,3}$.
\end{lemma}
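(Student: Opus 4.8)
The plan is to connect $z$ to two points on which Lemma \ref{lem:lam2} applies. Write $\hat\alpha:=\bar\alpha/|\bar\alpha|$, $\hat\beta:=\bar\beta/|\bar\beta|$ and $k:=(r^2-|\alpha|^2)/(s^2-|\beta|^2)$, so that \eqref{eq:nlam2} reads $|\bar\alpha|/|\bar\beta|=\sqrt k$ and $\alpha\cdot\hat\alpha=\sqrt k\,(\beta\cdot\hat\beta)$. I would look for a single $\Lambda$-segment through $z$ whose endpoints $z_1,z_2$ satisfy $M_0(z_i)=0$ together with the norm bounds; then Lemma \ref{lem:lam2} gives $z_i\in K_{r,s}^{\Lambda,2}$, and since $z$ lies on a $\Lambda$-segment between them we conclude $z\in K_{r,s}^{\Lambda,3}$.

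The ansatz is to move $\alpha$ and $\beta$ proportionally along the fixed directions $\hat\alpha,\hat\beta$, with ratio $\sqrt k$: for scalars $b_1>0>b_2$ to be fixed later, set $\alpha_i:=\alpha+\sqrt k\,b_i\hat\alpha$, $\beta_i:=\beta+b_i\hat\beta$, and $M_i:=\alpha_i\otimes\beta_i+p\id$, so that $M_0(z_i)=0$ automatically and $p$ is never changed. The motivation is that along any such line $M_0$ traces a matrix-valued quadratic; for it to vanish at two points its constant, linear and quadratic coefficients must all be proportional to a single rank-one matrix, which (given \eqref{eq:nlam1}) forces precisely this proportional motion along $\hat\alpha\otimes\hat\beta$.

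The main verifications are then the following. First, $\bar z:=z_1-z_2\in\Lambda$: since $M_i=\alpha_i\otimes\beta_i+p\id$, the computation is identical to Step 2 of Lemma \ref{lem:lam1} and reduces to $(\alpha-\beta)\cdot\bar\alpha\times\bar\beta=0$, i.e. \eqref{eq:nlam3}, with the degenerate case $\bar\alpha\times\bar\beta=0$ handled by the choice $\xi=(\alpha-\beta)\times\bar\alpha$ exactly as before. Second, that $z$ lies on the open segment $(z_2,z_1)$: matching the $\alpha$- and $\beta$-components forces the weight $\theta=|b_2|/(b_1+|b_2|)\in(0,1)$, and matching the $\hat\alpha\otimes\hat\beta$-component of the $M$-part then collapses, via \eqref{eq:nlam1}, to the single scalar identity $\sqrt k\,b_1|b_2|=c$. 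Third, the norm bounds at the endpoints: here the ratio $\sqrt k$ pays off, since $\alpha\cdot\hat\alpha=\sqrt k\,(\beta\cdot\hat\beta)$ yields $r^2-|\alpha_i|^2=k\,(s^2-|\beta_i|^2)$, whence $|\alpha_i|\le r\iff|\beta_i|\le s$, and it suffices to keep each $b_i$ between the two roots $b_-<0<b_+$ of $|\beta+b\hat\beta|^2=s^2$.

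The crux is the feasibility step, where the hypothesis $c\le G(z)$ enters exactly: I must choose $b_1\in(0,b_+]$ and $b_2\in[b_-,0)$ with $\sqrt k\,b_1|b_2|=c$. As the product $b_1|b_2|$ ranges over $(0,\,b_+|b_-|]$ and $b_+|b_-|=s^2-|\beta|^2$ (the product of the roots), such $b_i$ exist if and only if $c\le\sqrt k\,(s^2-|\beta|^2)=G(z)$. Recognizing $G(z)$ as precisely the maximal attainable value of $\sqrt k\,b_1|b_2|$ is the heart of the argument; once this is in place the remaining checks are routine consequences of Lemmas \ref{lem:lam1} and \ref{lem:lam2}.
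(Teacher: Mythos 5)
Your proof is correct and follows essentially the same route as the paper: both arguments construct a $\Lambda$-segment through $z$ in the direction $(\bar\alpha,\bar\beta)$ (with the ratio $\sqrt{k}=|\bar\alpha|/|\bar\beta|$ fixed by \eqref{eq:nlam2}), arrange for $M_0$ to vanish at the two endpoints while the norm constraints $|\alpha_i|\le r$, $|\beta_i|\le s$ hold thanks to $c\le G(z)$, and then invoke Lemma \ref{lem:lam2}. The only cosmetic difference is that you parametrize the endpoints via $b_1,b_2$ with the constraint $\sqrt{k}\,b_1|b_2|=c$, whereas the paper fixes the wave $\bar M$ explicitly and reads off $t_1<0<t_2$ as the roots of the resulting quadratic.
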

\begin{proof}
We proceed as in the corresponding step of the proof of Lemma \ref{lem:lam1}, considering 
 $\bar z:=(\bar \alpha,\bar \beta,\bar M)$ with $\bar \alpha,\bar \beta$ given by \eqref{eq:nlam1}-\eqref{eq:nlam3} and
$$\bar M=\alpha\otimes \bar\beta+\bar\alpha\otimes\beta - \frac{2\alpha\cdot\bar\alpha}{|\bar\alpha|^2}\bar\alpha\otimes\bar\beta.$$
Proving that $\bar z\in\Lambda$ works precisely the same way as in the proof of Lemma \ref{lem:lam1}.

The main difference now is that we no longer need to reach $|\alpha+t\bar\alpha|=r$ and $|\beta+t\bar\beta|=s$. Indeed, it suffices to show that there exist $t_1<0<t_2$ such that $M_0(z+t_{1,2}\bar z)=0$, $|\alpha+t_{1,2}\bar\alpha|\leq r$ and $|\beta+t_{1,2}\bar\beta|\leq s$, since this would imply that $z+t_{1,2}\bar z\in  K_{r,s}^{\Lambda,2}$ by virtue of Lemma \ref{lem:lam2}.

We obtain by \eqref{eq:nlam1} that
\begin{align*}
M_0(z+t\bar z)=\left(t^2+2t\frac{\alpha\cdot\bar\alpha}{|\bar\alpha|^2}-\frac{c}{|\bar\alpha||\bar\beta|}\right)\bar\alpha\otimes\bar\beta,
\end{align*}
and since $c>0$, the quadratic equation in the parentheses has two roots $t_1<0<t_2$. Furthermore, from \eqref{eq:nlam2} we have $$\frac{r^2-|\alpha|^2}{|\bar\alpha|^2}=\frac{G(z)}{|\bar\alpha||\bar\beta|},$$
which we may combine with $c\leq G(z)$ in order to obtain that $|\alpha+t_{1,2}\bar\alpha|\leq r$, and similarly that $|\beta+t_{1,2}\bar\beta|\leq s$. This concludes the proof of the lemma.
\end{proof}

Finally, we can combine all our above results in order to prove our main result for this section, which also encompasses the cases with non-strict inequalities.

\begin{proof}[Proof of Proposition \ref{prop:lam}]

The proof follows by appropriately combining Lemmas \ref{lem:lam2} and  \ref{lem:lam3}. Assume that the conditions of Proposition \ref{prop:lam} hold.

If either $G(z)$, $|\bar \alpha|$ or $|\bar \beta|$ is zero then we have $M_0(z) = 0$, hence, we are in the case of Lemma \ref{lem:lam2}, and the assertion of the proposition follows.

If all three of the above mentioned quantities are non-zero, we show that the conditions of Lemma \ref{lem:lam3} also hold. We easily obtain the validity of condition \eqref{eq:nlam1} for $\bar \alpha$ and $\bar\beta$ of Proposition \ref{prop:lam} by combining \eqref{eq:nnlam1} and \eqref{eq:nnlam1pt5}, and setting $c=|\bar \alpha||\bar\beta|$. Condition \eqref{eq:nlam1} remains valid for the rescaled vectors $\widetilde\alpha=\lambda\bar\alpha$ and $\widetilde\beta=\frac{1}{\lambda}\bar\beta$ with any $\lambda\neq 0$. Let us choose $\lambda$ such that the first condition from \eqref{eq:nlam2} holds for $\widetilde\alpha$ and $\widetilde\beta$. It is easy to see that the choice
$$
\lambda
=
\left(
\sqrt{\frac{r^2-|\alpha|^2}{s^2-|\beta|^2}}
\frac{|\bar\beta|}{|\bar \alpha|}
\right)^{\frac{1}{2}}
$$
is appropriate, and it also implies the second condition from \eqref{eq:nlam2}. Finally, due to \eqref{eq:nnlam4}, condition \eqref{eq:nlam3} holds trivially for $\widetilde\alpha$ and $\widetilde\beta$.
\end{proof}

\begin{remark}[A reformulation of the first-order laminates]\label{rem:rem2}
If, in addition, one effectuates the above transformations in the case of Lemma \ref{lem:lam1}, then one may also obtain the following equivalent characterization: if  $|\alpha|<r$ and $|\beta|<s$, then there holds $z\in K_{r,s}^{\Lambda,1}$ if and only if there exist $\bar\alpha,\bar\beta\in\mathbb R^3$ such that
there hold \eqref{eq:nnlam1}, \eqref{eq:nnlam4}, as well as 
\begin{align}\label{eq:flam1}
|\bar\alpha|^2=s^2-|\beta|^2,\quad|\bar\beta|^2=r^2-|\alpha|^2
\end{align}
and
\begin{align}\label{eq:flam2}
\alpha\cdot\bar\alpha=\beta\cdot\bar\beta.
\end{align}
\end{remark}

We conclude this section with the proof of Corollary \ref{cor:reform}.
\begin{proof}[Proof of Corollary \ref{cor:reform}]
We show that  the conditions of Proposition \ref{prop:lam} follow from conditions \eqref{eq:mlam1}-\eqref{eq:mlam4}.

If $M_0(z)=0$, then the conditions of the proposition  clearly hold by choosing $\bar\alpha=\bar\beta=0$.   
If $M_0(z)\neq 0$, then due to \eqref{eq:mlam1} there exist $\bar\alpha\neq0$ and $\bar\beta\neq0$ such that
$$
M_0(z)=-\bar\alpha\otimes\bar\beta
\qquad\mbox{and}\qquad
\|M_0(z)\|_n=|\bar\alpha||\bar\beta|\leq G(z),
$$
thus \eqref{eq:nnlam1} and \eqref{eq:nnlam1pt5} hold. From \eqref{eq:mlam1pt5} we obtain 
\begin{equation}
\label{eq:samesign}
0\geq\alpha^T M_0(z)\beta
=
-
(\alpha\cdot\bar\alpha)(\bar\beta\cdot\beta),
\end{equation}
thus $\alpha\cdot\bar\alpha$ and $\bar\beta\cdot\beta$ have the same sign. Using \eqref{eq:reformeq} we get
\begin{multline*}
|(\bar\beta\cdot\beta)\bar\alpha|\sqrt{r^2-|\alpha|^2}
=
|M_0(z)\beta|\sqrt{r^2-|\alpha|^2}
=\\
|M_0(z)^T\alpha|\sqrt{s^2-|\beta|^2}
=
|(\alpha\cdot\bar\alpha)\bar\beta|\sqrt{s^2-|\beta|^2},
\end{multline*}
which, together with \eqref{eq:samesign},  gives condition \eqref{eq:nnlam2}.

Finally, in addition to the abstract arguments from Remark \ref{rem:ohm}, let us explain how condition \eqref{eq:nnlam4} can be obtained from condition \eqref{eq:mlam4} directly.

A simple calculation yields
\begin{align*}
M-M^T=\alpha\otimes\beta-\beta\otimes\alpha+\bar\alpha\otimes\bar\beta-\bar\beta\otimes\bar\alpha,
\end{align*}
and using the identity
$$f_0(a\otimes b - b\otimes a)=a\times b,$$
this gives us
$$f_0(M-M^T)=\alpha\times\beta+\bar\alpha\times\bar\beta.$$
From \eqref{eq:mlam4} we have
$$0=(\alpha\times\beta+\bar\alpha\times\bar\beta)\cdot(\alpha-\beta)=\bar\alpha\times\bar\beta\cdot(\alpha-\beta),$$
which is precisely \eqref{eq:nnlam4}.

Thus, the conditions of Proposition \ref{prop:lam} are fulfilled and the assertion of the Corollary follows.
\end{proof}

\section{Proof of the upper bound}\label{sec:convex_hull_unconstraint}

In this section we prove Proposition \ref{prop:hull}.

\begin{proof}[Proof of Proposition \ref{prop:hull}]

Since we have that, for any $A\in\mathbb R^{3\times3}$, the nuclear norm can be written as
$$\|A\|_n=\sup_{\|Q\|_2\leq 1}\text{tr}(Q^T A),$$
let us show that, for any $\gamma\in[0,1]$, $Q\in\mathbb R^{3\times3}$ with $\|Q\|_2\leq 1$, the function
$$H_{\gamma,Q}(z):=\gamma(|\alpha|^2-r^2)+(1-\gamma)(|\beta|^2-s^2)+2\sqrt{\gamma(1-\gamma)}\text{tr}(Q^T M_0(z))$$
is convex on $Z$.

We take $\bar z\in Z$, $t\in \R$ and estimate
\begin{multline*}
\text{tr}(Q^T M_0(z+t\bar z))-\text{tr}(Q^T M_0(z))
= t \text{tr}(Q^T (\alpha\otimes\bar\beta+\bar\alpha\otimes\beta-\bar M))+
t^2 \bar\alpha\cdot Q\bar\beta.
\end{multline*}
Hence we have
that $H_{\gamma,Q}(z+t \bar z)$ is a quadratic polynomial with $t^2$-coefficient given by
\begin{multline*}
\gamma|\bar\alpha|^2+(1-\gamma)|\bar\beta|^2+2\sqrt{\gamma(1-\gamma)}\bar\alpha\cdot Q\bar\beta \\ \geq \gamma|\bar\alpha|^2+(1-\gamma)|\bar\beta|^2-2\sqrt{\gamma(1-\gamma)}|\bar\alpha| |\bar\beta|\geq 0,
\end{multline*}
from where the convexity of $H_{\gamma,Q}$ follows.

We may then conclude that for any $\gamma\in[0,1]$, the function 
$$H_\gamma(z):=\gamma(|\alpha|^2-r^2)+(1-\gamma)(|\beta|^2-s^2)+2\sqrt{\gamma(1-\gamma)}\|M_0(z)\|_n$$
is also convex on $Z$, by taking the supremum over $Q$ of a family of convex functions.

Since $H_\gamma=0$ on $K_{r,s}$, we get that $H_\gamma\leq 0$ on $K^\Lambda_{r,s}$, for all $\gamma\in[0,1]$. Taking $\gamma=0$ and $\gamma=1$ yield respectively $|\beta|\leq s$ and $|\alpha|\leq r$. If $\gamma\in(0,1)$, one may rewrite $H_\gamma\leq 0$ as
$$\|M_0(z)\|_n\leq \frac{\gamma(r^2-|\alpha|^2)+(1-\gamma)(s^2-|\beta|^2)}{2\sqrt{\gamma(1-\gamma)}},$$
thus we have shown that 
\begin{multline*}
\bigcap_{\gamma\in[0,1]}\set{H_\gamma\leq 0}\\=\bigcap_{\gamma\in(0,1)}\set{z\in Z:\ |\alpha|\leq r,\ |\beta|\leq s,\ \|M_0(z)\|_n\leq \frac{\gamma(r^2-|\alpha|^2)+(1-\gamma)(s^2-|\beta|^2)}{2\sqrt{\gamma(1-\gamma)}}}
\end{multline*}
is a convex set containing $K^\Lambda_{r,s}$. One may then, for each $z\in Z$ with $ |\alpha|\leq r,\ |\beta|\leq s$, minimize  the expression on the right-hand side of the last inequality in the sets above to get that
$$G(z)=\inf_{\gamma\in(0,1)}\frac{\gamma(r^2-|\alpha|^2)+(1-\gamma)(s^2-|\beta|^2)}{2\sqrt{\gamma(1-\gamma)}},$$
 where the infimum is achieved when both deficits are positive, and obtained by a boundary limit otherwise, for 
$$\bar\gamma:=\frac{s^2-|\beta|^2}{r^2-|\alpha|^2+s^2-|\beta|^2}\in[0,1].$$

Therefore, it follows that
$$\bigcap_{\gamma\in[0,1]}\set{H_\gamma\leq 0}=\set{ z\in Z:\ |\alpha|\leq r,\ |\beta|\leq s,\ \|M_0(z)\|_n\leq G(z)},$$
which as we have shown above, is convex. Finally, the proof of the Proposition follows by recalling that $f_0(M-M^T)\cdot(\alpha-\beta)=0$ is a $\Lambda$-affine condition, thus $\overline U$ is a $\Lambda$-convex set containing $K_{r,s}$, and consequently also $K^\Lambda_{r,s}$.
\end{proof}

\begin{remark}
We note here that although the above proof can be adapted to use any Schatten norm instead of the nuclear norm, our choice of $U$ is, in a sense, optimal due to the monotonicity of the Schatten norms, with the nuclear norm being the largest. Furthermore, the nuclear norm  plays a key role in the convex hull of rank-one matrices. Since in Lemma \ref{lem:lam1} we are essentially moving along a rank-one perturbation of $M_0(z)$, this further suggests that the natural choice is the nuclear norm. However, it is an open question regarding the possible reduction of the upper bound, whether similar convex functions to $H_\gamma$ above can be considered, namely
$$\tilde H_\gamma(z):=\gamma(|\alpha|^2-r^2)+(1-\gamma)(|\beta|^2-s^2)+2\sqrt{\gamma(1-\gamma)} g(z),$$
where $g$ possibly does not even depend on $M_0(z)$ at all.
 Of course for the arguments of the proof to work, one would need a estimate of the type
 $$g(z+t\bar z)-g(z)\geq c_1 t - t^2|\bar\alpha||\bar\beta|.$$
\end{remark}

\section*{Acknowledgements} 

The authors thank Sauli Lindberg for insightful conversations provided on the topic.

\vspace{30pt}
\noindent University of Debrecen, Faculty of Science and Technology, Institute of Mathematics, 4032 Debrecen Egyetem t\'er 1, Hungary\\
\texttt{borbala.fazekas@science.unideb.hu}\\
\noindent Department of Analysis and Operations Research, Institute of Mathematics, Budapest University of Technology and Economics, Műegyetem rkp. 3., H-1111 Budapest, Hungary,  and
HUN-REN Alfr\'ed R\'enyi Institute of Mathematics, 1053 Budapest, Re\'altanoda utca 13-15, Hungary,
jkolumban@math.bme.hu, Corresponding author \\
\texttt{jkolumban@math.bme.hu}

\end{document}